    \def\BibTeX{{\rm B\kern-.05em{\sc i\kern-.025em b}\kern-.08em
        T\kern-.1667em\lower.7ex\hbox{E}\kern-.125emX}}
\newtheorem{exe}{Example}
\newtheorem{corol}{Corollary}
\newtheorem{ass}{Assumption}
\newtheorem{defin}{Definition}
\newtheorem{prob}{Problem}
\newtheorem{cla}{Claim}
\newtheorem{rem}{Remark}
\newtheorem{lem}{Lemma}
\newtheorem{prop}{Proposition}
\newtheorem{thm}{Theorem}
\newtheorem{fct}{Fact}
\newenvironment{lemma}{\begin{lem}}{\hfill $\square$ \end{lem}}
\newenvironment{corollary}{\begin{corol}}{\hfill $\square$ \end{corol}}
\newenvironment{remark}{\begin{rem} \rm}{\hfill $\bullet$ \end{rem}}
\newenvironment{assumption}{\begin{ass}}{\hfill $\bullet$ \end{ass}}
\newenvironment{theorem}{\begin{thm}}{\hfill $\square$ \end{thm}}
\newenvironment{proof}[1][\!]{{\it Proof\,#1: }}{{\hfill \mbox{\footnotesize$\blacksquare$}}}
\begin{document}
\title{\LARGE \bf Global Uniform Ultimate Boundedness of Semi-Passive Systems Interconnected over Directed Graphs}


\author{Anes Lazri \quad Mohamed Maghenem \quad Elena Panteley \quad Antonio Lor\'{\i}a 
  \thanks{M. Maghenem is with University of Grenoble Alpes,  CNRS,  Grenoble INP,  GIPSA-Lab,  France. 
        E-mail: mohamed.maghenem@cnrs.fr; 
        E. Panteley and A. Lor\'ia are with L2S, CNRS, 91192 Gif-sur-Yvette, France. 
        E-mail: elena.panteley@cnrs.fr and antonio.loria@cnrs.fr 
        A. Lazri  is with L2S, CNRS, Univ Paris-Saclay, France 
          (e-mail: anes.lazri@centralesupelec.fr)
        }}

\maketitle

\begin{abstract}
We analyse the solutions of networked heterogeneous nonlinear systems\footnote{For simplicity, but without loss of generality, we assume that $x\in \mathbb{R}$; all statements hold after pertinent changes in the notation, if $x\in \mathbb{R}^p$, with $p >1$.}
\begin{equation}\label{120}
\dot x_i = f_i(x_i) + u_i \qquad  x_i \in \mathbb{R}, \quad  i \in \{1,2,\cdots,n\},
\end{equation}
where $f_i : \mathbb{R} \rightarrow \mathbb{R}$ is continuous for all $i \in \{1,2,\cdots, n\}$ and the control inputs are set to  
\begin{align} \label{134} 
u_i := - \gamma \sum_{i=1}^n a_{ij} (x_i - x_j)    \qquad \forall  i \in \{1,2,\cdots,n\},  
\end{align} 
where $\gamma>0$ is a coupling gain and $a_{ij}\geq 0$  are interconnection weights. We assume that the closed-loop interconnected systems form a network with an underlying connected directed graph that contains a directed spanning tree. For these systems, we establish global uniform ultimate boundedness of the solutions, under the assumption that each system \eqref{120} defines a semi-passive \cite{Pogr3} map $u_i \mapsto x_i$. As a corollary, we also establish global uniform global boundedness of the solutions.
\end{abstract}

\section{Preliminaries}\label{sec1}

\textbf{Notations.} For $x  \in \mathbb{R}^n$,  $x^\top$ denotes its transpose,  $|x|$ denotes its Euclidean norm,  
$\text{blkdiag}\{x\} \in \mathbb{R}^{n \times n}$ denotes the diagonal matrix whose $i$th diagonal element is the $i$th element of $x$.   For a set $K \subset \mathbb{R}^n$,  $|x|_K := \min \{|x-y| : y \in K \}$ denotes the distance of $x$ to the set $K$. For a symmetric matrix $Q \in \mathbb{R}^{n \times n}$,  $\lambda_i(Q)$ denotes the $i$th smallest eigenvalue of $Q$.  For an invertible  matrix $M \in \mathbb{R}^{n \times n}$, $M^-$ or $M^{-1}$ denotes its inverse.  Given  $N \in \mathbb{R}^{n \times n}$, $\text{Ker}(N) := 
\{v : N v =0\}$ denotes the kernel of $N$.  A class $\mathcal{K}^\infty$ function 
$\alpha : \mathbb{R}_{\geq 0} \rightarrow \mathbb{R}_{\geq 0}$ is continuous, strictly increasing, unbounded, and $\alpha(0) = 0$. Furthermore
$\alpha^-$ denotes the inverse function of $\alpha$.

\subsection{On Some Classes of Matrices}

A matrix $M := [m_{ij}]$,  $(i,j) \in  \{1,2,...,n\}^2$, is a $\mathcal{Z}$-matrix if $m_{ij} \leq 0$ whenever $i \neq j$.  
It is an $M$-matrix if it is a 
$\mathcal{Z}$-matrix and its eigenvalues have 
non-negative real parts.  Equivalently,  $M := \lambda I_n - B$,  where $B$ is a non-negative matrix and $\lambda \geq  \rho(B)$,  where $\rho(B) := \max \left\{ |\lambda_i(B)| : i\in \{1,2,...,n\}  \right\}$ is the spectral radius of $B$.   
$M$ is a non-singular $M$-matrix if it is a $\mathcal{Z}$-matrix and its eigenvalues have positive real parts.  
Equivalently,  $M := \lambda I_n - B$,  where $B$ is a non-negative matrix and $\lambda >  \rho(B) > 0$; 
see \cite{7337394,9312975} for more details.

\subsection{Graph Notions}
A directed graph or a digraph $\mathcal{G}(\mathcal{V},\mathcal{E})$ is characterized by the set of nodes $\mathcal{V} = \{1,2,...,n\}$, and the set of directed edges $\mathcal{E}$.  The edge set $\mathcal{E}$ consists of ordered pairs, of the form $(k, i)$, that indicate a directed link from node $k$ to node $i$.  Given a directed edge $(k, i) \in \mathcal{E}$, then node $k$ is called an \emph{in-neighbor} of node $i$. We assign a positive weight $a_{ik}$ to each edge $(k, i)$. That is, $a_{ik} = 0$ if $(k,i)$ is not an edge. The Laplacian matrix of a digraph is given by 
\begin{equation}
  \label{126} L  :=  
\begin{bmatrix}
 d_1  & - a_{12}   & \cdots & - a_{1n}  
\\
- a_{21}  &   d_{2}   & \cdots & - a_{2n}  
\\
\vdots &  \vdots  & \vdots & \vdots
\\
- a_{n-1 1}  &  \cdots & d_{n-1}  & - a_{n-1 n} 
\\
- a_{n 1}  &  \cdots & - a_{n n-1}   &   d_{n} 
\end{bmatrix} =: D - A,  
\end{equation}
where $d_i := \sum^n_{j=1}  a_{ij}$ for all $i \in \{1,2,...,n\}$,  $D$ is the diagonal part of $L$ and $A$ is called the adjacency matrix.   

A digraph is \textit{strongly connected} if, for any two distinct nodes $i$ and $j$, there is a path from $i$ to $j$. The Laplacian matrix of a strongly connected graph admits $\lambda_1(L)=0$ as an eigenvalue with the corresponding right and left eigenvectors
$1_n = \begin{bmatrix} 1 ~ 1 ~ \cdots ~ 1 \end{bmatrix}^\top$ and   
$v_{o} := \begin{bmatrix} v_1 ~ v_2 ~ \cdots ~ v_n \end{bmatrix}^\top$, respectively, 
where  $v_i >0$ for all $i \leq n$.

\subsection{Graph and Matrix Decomposition}
Suppose that the digraph $\mathcal{G}$ is connected and contains a spanning tree. Then, it admits a decomposition into a leading strongly connected subgraph  $\mathcal{G}_\ell \neq \O$ and a  subgraph  $\mathcal{G}_f:= \mathcal{G} \backslash \mathcal{G}_l$ of followers; namely,  the agents that do not belong to the leading component, and which we call the follower agents.  In this case,  up to a permutation, the Laplacian $L$ admits the lower-block decomposition
\begin{align} \label{eqWCdec} 
L = \begin{bmatrix} 
 L_\ell  &  0 \\  - A_{\ell f} & M_f \end{bmatrix},  
 \end{align}
where $L_\ell := D_\ell - A_\ell  \in \mathbb{R}^{n_\ell \times n_\ell}$ is the Laplacian matrix of the strongly connected component 
$\mathcal{G}_\ell$,  the lower-left block $A_{\ell f} \in \mathbb{R}^{n_f \times n - n_f}$, $n_f := n - n_\ell$,  is a non-negative matrix,  and the lower-right block $M_f \in \mathbb{R}^{n_f \times n_f}$ is a non-singular M-matrix.  The block $M_f$ can be seen as the sum of the Laplacian matrix $L_f$ corresponding to $\mathcal G_f$ and a diagonal matrix $D_{\ell f}$ gathering the weights of the interconnections between nodes in $\mathcal G_\ell$ and the nodes in $\mathcal G_f$.  That is,   $M_f = L_f + D_{\ell f}$,  where $L_f = D_f - A_f$.   

\subsection{Lyapunov Analysis of a Directed Graph}
Consider a network of $n$  single integrators of the form $\dot{x}_i = u_i$ interconnected according to the classical consensus protocol 
$$ u_i := -  \sum_{i=1}^n a_{ij} (x_i - x_j)    \qquad \forall  i \in \{1,2,\cdots,n\}. $$

In closed loop, the network is governed by  the linear  system 
$ \dot{x} = - L x$,   where $L \in \mathbb{R}^n$ is the Laplacian matrix of a connected  di-graph $\mathcal{G}$ that contains a  directed spanning tree. According to Section \ref{sec1},   we can decompose the state $x$ into  $x^\top := \left[ x_l^\top \ x_f^\top \right]$, where
$x_l \in \mathbb{R}^{n_l}$ gathers the states of the leading component and is governed by 
\begin{align*} 
 \Sigma_\ell: \, \dot{x}_\ell = - L_\ell x_\ell,  
\end{align*} 
and the non-leading component whose state is $x_f \in \mathbb{R}^{n_f}$,  are governed by 
\begin{align*} 
\Sigma_f: \, \dot{x}_f =  - M_f   x_f,
\end{align*} 
on the manifold $\{ x_f = 0 \}$.
In the rest of this section, we overview some Lyapunov-function constructions allowing to prove uniform exponential stability of $\mathcal{A}$ for $ \Sigma_\ell$,  where  
\begin{align} \label{eqAell}
 \mathcal{A}:= \{ x_l \in \mathbb{R}^{n_l} : x_{l1} = x_{l2} = \cdots = x_{l_{n_\ell}} \},
 \end{align}
and   exponential stability of the origin for $ \Sigma_f$.

\subsubsection{Proof of uniform exponential stability of $\mathcal{A}$ for $ \Sigma_\ell$}
let $v_{o} := \begin{bmatrix} v_1 ~ v_2 ~ \cdots ~ v_{n_\ell} \end{bmatrix}^\top$ be a left eigenvector associated to $\lambda_1(L_\ell)=0$ and $V_o :=\text{blkdiag} \{v_o \}$.
Based on Lemma \ref{lem1} in the Appendix,  $Q_o:= L_\ell^\top V_{o}  + V_{o} L_\ell$  is symmetric and positive semi-definite,  and its kernel is spanned by $\boldsymbol 1_{n_\ell}$.  Then, the derivative of the Lyapunov function candidate 
$W(x_\ell) := x^\top_\ell V_{o} x_\ell$,  along the solutions to $ \Sigma_\ell$,  satisfies
$$ \dot W(x_\ell)= - x^\top_\ell ( L_\ell^{\top} V_{o}  + V_{o} L_\ell) x_\ell  \leq -\lambda_2 (Q_o) |x_\ell|_{\mathcal{A}_\ell}^2. $$
 Now, we let 
$$ Z(x_\ell) :=  \left( x_\ell - \boldsymbol 1_{n_\ell} v_{o}^\top  x_\ell \right)^\top V_{o} \left( x_\ell -  \boldsymbol 1_{n_\ell} v_{o}^\top  x_\ell \right),  $$ 
which is positive definite. Its derivative along the solutions of $\dot x_\ell = - L_\ell x_\ell$ satisfies 
 \begin{align} \label{eqDeltaZs}
\dot Z(x_\ell)= - x^\top_\ell Q_{o} x_\ell  \leq 
- \lambda_2(Q_{o})  |x_\ell|^2_{\mathcal{A}_\ell}.  
\end{align}
To obtain the previous expression we used $v_o^\top L = 0$, $v_{1}^\top \boldsymbol 1_{n_\ell} =1$ and that $\boldsymbol 1_{n_\ell}$ is in the kernel of $I_{n_\ell} - \boldsymbol 1_{n_\ell} v_{o}^\top$.  Moreover,   $I_{n_\ell} - \boldsymbol 1_{n_\ell} v_{o}^\top$ is the Laplacian matrix of an all-to-all graph; hence,  $\boldsymbol 1_{n_s}$ spans the kernel of $I_{n_\ell} - \boldsymbol 1_{n_\ell} v_{o}^\top$.  Therefore, there exist \color{black} $\bar{z}$,  $\underline{z} >0$ such that 
\begin{align}  \label{eqZpropers}
 \underline{z} |x_\ell|^2_{\mathcal{A}_\ell}   \leq  Z(x_\ell) \leq \bar{z} |x_\ell|^2_{\mathcal{A}_\ell} \qquad \forall x_\ell \in \mathbb{R}^{n_\ell}.   
 \end{align}
Uniform exponential stability of $\mathcal{A}_\ell$ from \eqref{eqDeltaZs} and \eqref{eqZpropers} and standard Lyapunov-stability theory.  \color{black}

\subsubsection{ Proof of exponential Stability of the Origin for $ \Sigma_f$}
based on Lemma \ref{lem2Mf},  since $M_f$ is a non-singular $M$-matrix, we can use the Lyapunov function candidate $Y(x_f) :=  x^\top_f  R_f x_f$,  where 
$R_f := \text{blkdiag} \left\{ {M_f}^{-\top}
1_{n_f} \right\}   \left( \text{blkdiag} \left\{ M_f^{-1} 
1_{n_f} \right\} \right)^{-1} $,  which is positive definite.  
Furthermore,  along the solutions  to $ \Sigma_f$, we have 
$$\dot Y(x_f)=  - x^\top_f [ M^{\top}_f R_{f}  + R_f M_f] x_f.$$
Now, since $( M^{\top}_f R_{f}  + R_f M_f)$ is positive definite,  exponential stability of the origin for $\Sigma_f$ follows.

\section{Problem formulation}

Consider the systems \eqref{120}-\eqref{134}, with $\gamma>0$ and $a_{ij} \geq 0$. 
Then, defining $x := [x_1\ \cdots \ x_n]^\top$, and $F(x) := \big [f_1( x_1),f_2( x_2), \cdots, f_n(x_n) \big ]^\top$, we may write the closed-loop system in compact form as
\begin{equation} \label{eqcl}
\dot x = F(x) - \gamma L x,
\end{equation}
where $L$ is defined as in \eqref{126}. This is a networked system with an underlying topology that may be represented by a graph  $\mathcal{G}$. 
\begin{assumption} \label{ass0}
The graph $\mathcal G$ is connected and contains a directed spanning tree. 
\end{assumption}
\color{black}

We are interested in verifying the following two boundedness properties for \eqref{eqcl}. 

\begin{enumerate}[label={(P\arabic*)},leftmargin=*]
\item  \label{itemP1}  \textit{Global Uniform Boundedness} (GUB).    
The solutions $t \rightarrow x(t)$ to \eqref{eqcl} are globally bounded, uniformly in $\gamma$,  if, for every $r_o > 0$ and $\gamma_o > 0$,  there exists $R = R(r_o, \gamma_o) \geq r_o$ such that,  for all 
$\gamma \geq \gamma_o$,
\[
|x(t_o)| \leq r_o \Rightarrow |x(t)| \leq R  \quad \forall t  \geq 0.
\]
\item \label{itemP2}  \textit{Global Uniform Ultimate Boundedness} (GUUB).  The solutions $t \rightarrow x(t)$ to \eqref{eqcl} are ultimately bounded, uniformly in $\gamma$,  if given $\gamma_o > 0$,  there exists $r =  r(\gamma_o)>0$ such that,  for all $r_o >0$,  there exists $T = T(r_o,\gamma_o) \geq 0$ such that, for all $\gamma \geq \gamma_o$,
\[ |x(t_o)| \leq r_o \Rightarrow |x(t)| \leq r \quad \forall t \geq  T.
\]
\end{enumerate}

To verify the latter two properties,  we make the following assumption on the individual nodes' dynamics in \eqref{120}. 

\begin{assumption}  [State strict semi-passivity] \label{ass1}
For each  $i \in \{1,2,...,n\}$,  the input-output map $u_i \mapsto x_i$ defined by the dynamics \eqref{120} is state strict semipassive \cite{IJBC_Pogromsky}. Furthermore,  there exists a continuously differentiable storage function $V_i : \mathbb{R}^n \rightarrow \mathbb{R}^+$, 
a class $\mathcal{K}_\infty$ function $\underline{\alpha}_i$,  a constant
$\rho_i > 0$,  a continuous function $H_i : \mathbb{R} \rightarrow \mathbb{R}$,  and a  continuous function $\psi_i : \mathbb{R}_{\geq 0} \rightarrow \mathbb{R}_{> 0}$, such that
\begin{equation}\label{128}
\underline{\alpha}_i(|x_i|) \leq V_i(x_i), \quad    \dot{V}_i(x_i) \leq 2 u_i x_i - H_i(x_i),  
\end{equation}
and $H_i(x_i) \geq \psi_i(|x_i|)$ for all $|x_i| \geq \rho_i$.
\end{assumption}

\begin{remark}
The property described in Assumption \ref{ass1} is called strict {\it quasipassivity} in \cite{POLUSHIN1998505}. In \cite{Pogr3} the authors define a similar concept named strict semi-passivity, but radial unboundedness of the storage function is not imposed. See also \cite{IJBC_Pogromsky}.
%
%
\end{remark}

\section{Main result}

\begin{theorem}[Uniform ultimate boundedness]  \label{thm1}
The solutions of the networked system \eqref{120}-\eqref{134} are globally uniformly ultimately bounded, {\it i.e.,} Property  \ref{itemP2} holds, if Assumptions \ref{ass0} and \ref{ass1} are satisfied.
\end{theorem}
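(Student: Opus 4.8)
\emph{Proof strategy.} The plan is to read the closed loop \eqref{eqcl} as a cascade through the lower-block-triangular form \eqref{eqWCdec}: after relabelling the nodes so that $L$ takes that form, and partitioning $x=[x_\ell^\top\ x_f^\top]^\top$ and $F=[F_\ell^\top\ F_f^\top]^\top$ accordingly, the dynamics split into the autonomous leading subsystem $\dot x_\ell = F_\ell(x_\ell)-\gamma L_\ell x_\ell$ and the driven follower subsystem $\dot x_f = F_f(x_f)+\gamma A_{\ell f}x_\ell-\gamma M_f x_f$. I would first prove Property \ref{itemP2} and completeness for $x_\ell$, then, treating $\gamma A_{\ell f}x_\ell$ as an exogenous signal, prove the analogue for $x_f$; the two ultimate bounds then combine.

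\emph{Leading subsystem.} I would use the weighted sum of storage functions $W_\ell(x_\ell):=\sum_{i\le n_\ell}v_iV_i(x_i)$, where $v_o=[v_1\ \cdots\ v_{n_\ell}]^\top>0$ is the left eigenvector of $L_\ell$ for $\lambda_1(L_\ell)=0$; by \eqref{128} it is positive definite and radially unbounded. Since $u_i=-\gamma(L_\ell x_\ell)_i$ at a leader node, \eqref{128} and Lemma \ref{lem1} (which gives $Q_o:=L_\ell^\top V_o+V_o L_\ell\succeq0$ with $\text{Ker}(Q_o)=\mathrm{span}(\boldsymbol 1_{n_\ell})$, $V_o:=\text{blkdiag}\{v_o\}$) yield
\[
\dot W_\ell \;\le\; -\gamma\,x_\ell^\top Q_o x_\ell-\sum_{i\le n_\ell}v_iH_i(x_i) \;\le\; -\gamma\lambda_2(Q_o)\,|x_\ell|_{\mathcal A_\ell}^2-\sum_{i\le n_\ell}v_iH_i(x_i).
\]
Each $-H_i$ is bounded above --- by $\max_{|x_i|\le\rho_i}|H_i|$ on $\{|x_i|\le\rho_i\}$ and by $0$ on $\{|x_i|\ge\rho_i\}$, where $H_i\ge\psi_i>0$ --- so the last sum is at most a constant $c_\ell$; hence $\dot W_\ell<0$ whenever the disagreement satisfies $|x_\ell|_{\mathcal A_\ell}^2>c_\ell/(\gamma_o\lambda_2(Q_o))$, while on the complementary (near-consensus) region a routine estimate shows that $|x_\ell|$ past a threshold --- depending on the $\rho_i$ and $\gamma_o$ but not on $\gamma$ or $r_o$ --- forces every $|x_i|\ge\rho_i$, so that $\dot W_\ell\le-\sum v_i\psi_i(|x_i|)<0$. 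Thus $\dot W_\ell<0$ outside a ball $B_{\mu_\ell}$ with $\mu_\ell$ uniform over $\gamma\ge\gamma_o$ and independent of $r_o$; since $W_\ell$ is radially unbounded, standard Lyapunov arguments then give completeness of $x_\ell$, global uniform boundedness with transient bound $R_\ell(r_o)$, and global uniform (over $\gamma\ge\gamma_o$) ultimate boundedness by some $r_\ell$ determined by $\mu_\ell$ alone, attained after a time $T_\ell(r_o)$.

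\emph{Follower subsystem.} I would use $Y_f(x_f):=\sum_{n_\ell<i\le n}[R_f]_{ii}V_i(x_i)$, where $R_f$ is the matrix from the exponential-stability analysis of $\Sigma_f$: it is \emph{diagonal} (a product of two diagonal matrices) and \emph{positive} (since $M_f$ and $M_f^\top$ are nonsingular $M$-matrices, $M_f^{-1}\boldsymbol 1_{n_f}$ and $M_f^{-\top}\boldsymbol 1_{n_f}$ are positive), so $Y_f$ is positive definite and radially unbounded. With $u_i=\gamma(A_{\ell f}x_\ell)_i-\gamma(M_f x_f)_i$ at a follower node, \eqref{128}, $S_f:=M_f^\top R_f+R_f M_f\succ0$ (Lemma \ref{lem2Mf}), completing the square on the cross term, and bounding $-\sum_{n_\ell<i\le n}[R_f]_{ii}H_i(x_i)$ above by a constant $c_f$ as before give
\begin{equation*}
\dot Y_f \;\le\; -\tfrac{\gamma}{2}\lambda_1(S_f)\,|x_f|^2+\tfrac{2\gamma}{\lambda_1(S_f)}\|R_f A_{\ell f}\|^2\,|x_\ell|^2+c_f .
\end{equation*}
For $t\ge T_\ell$ one has $|x_\ell(t)|\le r_\ell$, so $\dot Y_f\le-\tfrac{\gamma}{2}\lambda_1(S_f)|x_f|^2+\gamma c_1+c_f$ with $c_1:=2\|R_f A_{\ell f}\|^2 r_\ell^2/\lambda_1(S_f)$. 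The decisive point is that $\gamma$ multiplies the dissipative term and the leader-induced disturbance alike, so $\dot Y_f<0$ as soon as $|x_f|^2>2c_1/\lambda_1(S_f)+2c_f/(\gamma_o\lambda_1(S_f))$, a threshold uniform over $\gamma\ge\gamma_o$ and independent of $r_o$. Since $x_\ell$ is complete the right-hand side of the displayed inequality is locally bounded in $t$, so $Y_f$ has no finite escape time and $x_f$ enters a fixed sublevel set of $Y_f$ after some $T\ge T_\ell$ (depending on $r_o$), yielding an ultimate bound $r_f$ uniform over $\gamma\ge\gamma_o$. Hence $|x(t)|\le(r_\ell^2+r_f^2)^{1/2}=:r$ for all $t\ge T$, with $r$ depending only on $\gamma_o$: this is Property \ref{itemP2}.

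\emph{Main obstacle.} The difficulty I foresee is that no single \emph{weighted-storage} function $\sum_i p_iV_i(x_i)$ serves for the whole network: its derivative produces $PL+L^\top P$, whose $(1,1)$-block in \eqref{eqWCdec} is only positive \emph{semi}definite (kernel $\mathrm{span}(\boldsymbol 1_{n_\ell})$) and which is generically indefinite once the coupling block $-A_{\ell f}$ is included. Passing to the cascade removes this, but then the leader's influence on the followers --- entering scaled by $\gamma$ --- must be dominated, and it is precisely the equally $\gamma$-scaled dissipation $-\gamma x_f^\top S_f x_f$ furnished by the nonsingular $M$-matrix $M_f$ that makes this possible while keeping the bound uniform in $\gamma$. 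The secondary nuisance that each $H_i$ may be indefinite on $\{|x_i|<\rho_i\}$ is handled throughout by splitting node sets at $\rho_i$ and using that trajectories stay in compact sublevel sets of the (radially unbounded) storage functions.
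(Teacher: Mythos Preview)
Your proposal is correct and follows essentially the same approach as the paper: the cascade decomposition via \eqref{eqWCdec}, the weighted storage functions $W_\ell=\sum v_iV_i$ and $Y_f=\sum [R_f]_{ii}V_i$, the two-region analysis (large disagreement versus near consensus) for the leading part, and the $\gamma$-uniform threshold for the followers obtained by completing the square are precisely the paper's argument. The only place where you are terser than the paper is the transient control of $x_f$ on $[0,T_\ell]$---the paper explicitly bounds $|x_f(T_\ell)|$ by a quantity $\bar r_o$ depending only on $(r_o,\gamma_o)$, which is what makes $T$ independent of $\gamma$; this follows from your displayed inequality together with your transient bound $R_\ell(r_o)$ on $x_\ell$, but you should state it rather than rely on ``locally bounded in $t$''.
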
\color{black}
\begin{proof}
Under Assumption \ref{ass0}, the Laplacian matrix $L$ admits a permutation, such that  \eqref{eqWCdec} holds. Therefore, the state $x$ may be decomposed into $x := [x_\ell^\top\ x_f^\top]^\top$ and the system \eqref{eqcl} takes the cascaded form 
\begin{subequations}
  \label{273}
\begin{align} \label{eqLead}
\dot x_\ell & = f_\ell (x_\ell ) - \gamma L_\ell x_\ell,  & f_\ell(x_\ell) := \begin{bmatrix}f_1(x_{\ell_1})& \cdots& f_{n_\ell}(x_{\ell _{n_\ell}}) \end{bmatrix}^\top\\
\label{foldyn}
\dot x_f &= f_f(x_f) + \gamma A_{\ell f} x_\ell - \gamma M_f x_f, &\qquad  f_f(x_f) := \begin{bmatrix} f_{n_\ell+1}(x_{f_1}) & \cdots& f_{n_\ell+n_f}(x_{f _{n_f}})  \end{bmatrix}^\top.
\end{align}
\end{subequations}
Equation \eqref{eqLead} corresponds to the dynamics of a {\it leading} component, a networked system with an underlying strongly connected graph $\mathcal G_\ell$, and a {\it follower} component, with dynamics \eqref{foldyn}. The proof of the statement is constructed using a cascades argument and proving, firstly, global uniform  ultimate boundedness for the solutions of \eqref{eqLead} and, consequently, the same property for \eqref{foldyn}.

To that end, let $r_o > 0$ be arbitrarily fixed and let $|x(0)| \leq r_o$. Then, $|x_\ell(0)| \leq r_o$ and $|x_f(0)| \leq r_o$.   
 \color{black}

\noindent 1) {\it Uniform ultimate boundedness for  the leading component}: 
after  Assumption \ref{ass1}, for each $i \in \{1,2,..., n_\ell\}$, 
there exists a storage function $V_{i}$  such that its total derivative along the trajectories of \eqref{120} satisfies \color{black} 
\begin{equation}\label{217}
\dot V_{i}(x_{\ell i})  \leq 2  u_i^\top x_{\ell i} - H_i (x_{\ell i}), \quad H_i(x_{\ell i}) \geq \psi_i(|x_{\ell i}|) \quad \forall |x_{\ell i}| \geq \rho_i.
\end{equation}

Next, let \color{black}$W(x_\ell) :=  \sum_{i=1}^{n_\ell} v_i V_i(x_{\ell i})$, where $v_i$ corresponds to the $i$th element of $v_o$, which is \color{black} the left eigenvector associated to the zero eigenvalue of $L_\ell$.  Since the graph $\mathcal{G}_\ell$ is strongly connected,  then $v_i > 0$ for all $i \leq n_\ell$, so $W$ is positive definite and radially unbounded. \color{black}  Now,  from \eqref{217}, we obtain \color{black}
\begin{equation}\label{222}
\dot W (x_{\ell}) \leq  2 \sum_{i=1}^{n_\ell} v_i u_i^\top x_{\ell i} -  \sum_{i=1}^N v_i H_i (x_{\ell i}), \qquad \forall x_\ell \in \mathbb R^{n_\ell}. \color{black}
\end{equation}
The first term on the right-hand side of \eqref{222} satisfies
\begin{equation}\label{226}
 \sum_{i=1}^{n_\ell} v_i u_i^\top x_{\ell i}  = u^\top V_o  x_\ell,
\end{equation}
where $V_o:=\mbox{blkdiag}\{v_o\}$ and, since $u=-\gamma L_\ell x_{\ell}$,  it follows that 
\begin{align}\label{248}
\nonumber 
\dot W (x_{\ell}) &\leq  -  \sum_{i=1}^{n_\ell} v_i H_i (x_{\ell i}) - \gamma x_\ell^\top [ L_\ell^\top V_o +V_o L_\ell]  x_\ell\\
 &\leq  -   \sum_{i=1}^{n_\ell} v_i H_i (x_{\ell i}) - \gamma x_\ell^\top Q_o x_\ell,
\end{align}
with $Q_o := V_o L_\ell + L_\ell^\top V_o$,  which is positive semi-definite---see Lemma \ref{lem1} in the Appendix.  Furthermore,  we note that 
$$- x_\ell^\top Q_o x_\ell =  - \left[x_\ell - \boldsymbol 1_{n_\ell} \boldsymbol 1_{n_\ell}^\top x_\ell/n_\ell \right]^\top Q_o \left[x_\ell - \boldsymbol 1_{n_\ell} \boldsymbol 1_{n_\ell}^\top x_\ell/n_\ell \right] \leq - \lambda_2(Q_o) |x_\ell|_{\mathcal A}^2, $$ 
where $|x_\ell|_{\mathcal A}$ denotes the distance of $x_\ell$ to the set $\mathcal{A}$ and  
$\lambda_2(Q_o)$ is the second smallest eigenvalue of $Q_o$.

Now, on one hand, we have that $v_i >0 $ for all $i\in \{1,2,\ldots,n\}$ and, on the other,  $-H_i(x_{\ell i}) > 0$  only if $|x_{\ell i}| \leq \rho_i$. Therefore,  the constant \color{black}
$  H_\ell := - \sum_{i=1}^{n_\ell}  \max_{|x_i| \leq \rho_i}  \left\{ v_i H_i\big (x_{\ell i} \big ) \right\} > 0$.   
Therefore, after \eqref{248}, we get \color{black}
\begin{align} \label{eqWdot}
 \dot W (x_{\ell}) &\leq H_\ell -  \gamma \lambda_2(Q_o) |x_\ell|_{\mathcal A}^2 \qquad \forall \, x_\ell \in \mathbb R^{n_\ell}. \color{black}
\end{align}
In turn,   given $\gamma_o>0$ and $\epsilon >0$,  for all $\gamma \geq \gamma_o$,  we have
\begin{align}\label{257}
\dot W  (x_{\ell}) & \leq H_\ell -  \gamma_o \lambda_2(Q_o) |x_\ell|_{\mathcal A}^2 \leq -\epsilon   \qquad \forall x_\ell \notin \mathcal{C}, 
\end{align}
where 
$$\mathcal C := \left\{ x_\ell \in \mathbb{R}^{n_\ell} : |x_\ell|_{\mathcal A} \leq \sqrt{n_\ell} R_e  :=  \sqrt{\frac{\epsilon + H_\ell}{\gamma_o  \lambda_2(Q_o)}} \right\}.  $$ 

Next, let  $\bar \rho := \text{arg}\hspace{-.9em}\displaystyle\max_{\hspace{-1em}i \in \{1,2,...,n_\ell\}} \rho_i $ and \color{black}
$$ \mathcal B_\beta := \{ x_\ell \in \mathbb{R}^{n_\ell} : |x_\ell| \leq \beta :=  \sqrt{n_\ell} \big (\bar \rho + 2 R_e) \}.  $$ 
Note that for all $x_\ell \notin \mathcal B_\beta$, we have 
 $|x_\ell| >  \sqrt{n_\ell}  (\bar \rho + 2R_e)$
and, for all $x_\ell \in \mathcal{C} \backslash \mathcal{B}_\beta$,  
\begin{align} \label{eqbounds} 
|x_\ell| >  \sqrt{n_\ell}  (\bar \rho + 2 R_e) \quad \text{and} \quad |x_\ell|_\mathcal{A} \leq \sqrt{n_\ell} R_e.   
\end{align}
Furthermore,  we use the fact that 
$ x_\ell  =  \boldsymbol 1_{n_\ell} (\boldsymbol 1_{n_\ell}^\top  x_\ell)/n_\ell  + \left[ x_\ell - \boldsymbol 1_{n_\ell} (\boldsymbol 1_{n_\ell}^\top  x_\ell)/n_\ell \right]$,  and the fact that $|x_\ell|_\mathcal{A} = |x_\ell - \boldsymbol 1_{n_\ell} (\boldsymbol 1_{n_\ell}^\top  x_\ell)/n_\ell|$,  to conclude that
\begin{align} \label{eqbounds1}
   | x_\ell | \leq |x_\ell|_\mathcal{A}  +  | \boldsymbol 1_{n_\ell}^\top  x_\ell| / \sqrt{n_\ell}. 
\end{align}
Now,  combining \eqref{eqbounds} and \eqref{eqbounds1}, we conclude that for all \color{black}$x_\ell \in \mathcal{C} \backslash \mathcal{B}_\beta$, 
\begin{align} \label{eqbounds2}
\sqrt{n_\ell}  (\bar{\rho} + 2 R_e) <   | x_\ell | \leq |x_\ell|_\mathcal{A}  + | \boldsymbol 1_{n_\ell}^\top  x_\ell| / \sqrt{n_\ell}  \leq \sqrt{n_\ell} R_e + | \boldsymbol 1_{n_\ell}^\top  x_\ell|/  \sqrt{n_\ell}. 
\end{align}
So, for all $x_\ell \in \mathcal{C} \backslash \mathcal{B}_\beta$, 
$  | \boldsymbol 1_{n_\ell}^\top  x_\ell|/n_\ell >  \bar{\rho} + R_e$. 
Next,  we use the fact that  
$$ x_{\ell i} = \boldsymbol 1_{n_\ell}^\top  x_\ell/n_\ell  + \left( x_{\ell i}  - \boldsymbol 1_{n_\ell}^\top  x_\ell / n_\ell \right) \quad \forall i \in\{1,2,...,n_\ell\} $$
to conclude that 
$|x_{\ell i}| > \color{black}|\boldsymbol 1_{n_\ell}^\top  x_\ell| / n_\ell - |\left( x_{\ell i}  - \boldsymbol 1_{n_\ell}^\top  x_\ell/n_\ell \right)|$. Hence, 
$$|x_{\ell i}| > \bar{\rho} +  R_e  - \sqrt{n_\ell} R_e > \bar{\rho} \qquad \forall i \in \{1,2,...,n_\ell \}   $$
for all \color{black}$x_\ell \in \mathcal{C} \backslash \mathcal{B}_\beta$. The latter,  under Assumption \ref{ass1},   implies that
\begin{align}
\nonumber 
-   \sum_{i=1}^{n_\ell} v_i H_i (x_{\ell i})   \leq -   \sum_{i=1}^{n_\ell} v_i \psi_i (|x_{\ell i}|)  \leq  0 \qquad \forall x_\ell \in \mathcal{C} \backslash \mathcal{B}_\beta.
\end{align}
As a result,  setting $\Psi (x_\ell) := \sum_{i=1}^{n_\ell} v_i \psi_i (|x_{\ell i}|)$---note that $\Psi$ is continuous and positive---we conclude that
$$
 \dot W (x_{\ell}) \leq  - \Psi (x_\ell)  \qquad \forall x_\ell \in \mathcal{C} \backslash \mathcal{B}_\beta.
$$
Combining the latter inequality to \eqref{257},  we conclude that
$$
 \dot W (x_{\ell}) \leq  - \min \{ \Psi (x_\ell),  \epsilon \}  \qquad \forall x_\ell \in \mathbb{R}^{n_\ell}  \backslash \mathcal{B}_\beta.
$$
The latter is enough to conclude global attractivity and forward invariance of the set 
\begin{align*} 
\mathcal{S}_\sigma := \{ x_\ell \in \mathbb{R}^{n_\ell} : W(x_\ell) \leq \sigma \}, \quad \sigma :=  \max \{ W(y) : y \in \mathcal{B}_\beta \}.  
\end{align*}
Furthermore, since $W : \mathbb{R}^n \rightarrow \mathbb{R}_{\geq 0}$ is continuous and $\mathcal{B}_\beta$ is bounded, we conclude that $\sigma$ is well defined. Consequently, the ultimate bound  is 
$$ r_\ell := \left[  \min_i \{\underline{\alpha}_i\} \right]^- (\sigma), $$
where, with an abuse of notation, $\min_i \{\underline{\alpha}_i\}$ corresponds to the function $s\mapsto \psi(s)$ defined as  $ \psi(s) := \min_i \{\underline{\alpha}_i(s)\}$ for each $s \geq 0$ and $\underline{\alpha}_i$ is defined in Assumption \ref{ass1}, so $\psi: \mathbb R_{\geq 0}\to \mathbb R_{\geq 0}$ is strictly increasing and radially unbounded, hence, globally invertible.  Thus,  $W(x_\ell) \leq \sigma$ implies that $|x_\ell| \leq r_\ell$. \color{black}

Next, we compute an upperbound \color{black} $T_\ell (r_o,\gamma_o)$ on \color{black} the time that the solutions to \eqref{eqLead}, with $\gamma \geq \gamma_o$ and starting from $\mathcal{B}_{r_o} := \{x_\ell \in \mathbb{R}^{n_\ell} : |x_\ell| \leq r_o \}$, take to reach the compact set \color{black} $\mathcal{B}_\beta \subset \mathcal{S}_\sigma$. For this, we assume without loss of generality that $r_o \geq \beta$, and we define
$$ \epsilon_{r_o} := \min \{ \min \{ \Psi (x_\ell),  \epsilon \}  : |x_\ell | \geq \beta, ~ x_\ell \in \mathcal{S}_{\sigma_o} \},  $$
where 
\begin{align} \label{eqSsigmao} \mathcal{S}_{\sigma_o} := \{ x_\ell \in \mathbb{R}^{n_\ell} 
: W(x_\ell) \leq \sigma_o
\}, ~ \sigma_o := \max \{ W(y) : y \in \mathcal{B}_{r_o} \}. 
\end{align}
Clearly, $\mathcal{S}_{\sigma_o}$ is compact; hence, $\epsilon_{r_o}$ is positive. 

Therefore, along every solution $t \mapsto x_\ell(t)$ to \eqref{eqLead} starting from 
$x_{\ell} (0) \in \mathcal{B}_{r_o} \backslash \mathcal{B}_{\beta}$, we have 
$\dot W (x_{\ell}(t)) \leq  - \epsilon_{r_o}$, up to the earliest time when $x_\ell$ reaches $\mathcal{B}_{\beta}$. For any earlier time, \color{black} 
we have 
\begin{equation}\label{338}
W(x_{\ell}(t)) \leq  - \epsilon_{r_o} t + W(x_{\ell}(0)),
\end{equation} 
so we can take $T_\ell(r_o,\gamma_o) = \sigma_o/\epsilon_{r_o}$. 
Clearly, $T_\ell$ depends only on $(r_o,\gamma_o)$ and $r_\ell$ depends only on $\gamma_o$. Thus, the ultimate bounded  guaranteed for the solutions of \color{black} \eqref{eqLead} is uniform in $\gamma$.

\noindent 2) {\it  Uniform ultimate boundedness for the follower dynamics}:
following up the previous computations and arguments, 
establish global uniform ultimate boundedness for the non-leading component, determined by  \eqref{foldyn}.\color{black}

Using Lemma \ref{lem2Mf}, we conclude that the matrices 
$$ S := P M_f + M_f^\top P \quad  \text{and} \quad  P :=  \text{blkdiag} \left\{ {M_f^\top}^- 
1_{n} \right\}   \left( \text{blkdiag} \left\{ M_f^- 
1_{n} \right\} \right)^- $$ 
are symmetric and positive definite. We also note that $P$ is diagonal. Then, let $p_i$, for $i \in \{1,2,..., n_f\}$, be the $i$th diagonal element of $P$. In addition,  let $Z (x_f) := \sum_{i=1}^{n_f} p_i V_i (x_{f i})$. Its total derivative along the trajectories of \eqref{foldyn} satisfies
\begin{align}\label{405}
\dot Z(x_f) \leq - \sum_{i=1}^{n_f} p_i H_i(x_{ f i})- \gamma x_f^\top [  P M_f + M_f^\top P ] x_f + 2 \gamma x_f^\top [ P A_{\ell f} ] x_\ell.
\end{align}

On one hand, \color{black}we already established the existence of $r_\ell(\gamma_o) > 0$ and $T_\ell(\gamma_o,r_o)$ such that 
$$  |x_\ell(t)|<r_\ell \qquad  \forall t \geq T_\ell. $$
On the other, for all \color{black} $|x_\ell| \leq r_l$, 
\begin{align} \label{296}
\dot Z (x_f) \leq  H_f - \gamma \lambda_{\text{1}}(S) |x_f|^2 + 2 \gamma \bar p r_\ell |x_f|, 
\end{align}
where $\bar p :=| P A_{\ell f}|$ and $H_f := \sum_{i=1}^{n_f}  \max_{|x_i| \leq \rho_i}  \{v_i H_i\big (x_{f i} \big ) \}$. Now, from this and \eqref{405}, we obtain \color{black}
\begin{align*}
\dot Z(x_f) & \leq H_f - \gamma x_f^\top S x_f + 2 \gamma x_f^\top [ P A_{\ell f} ] x_\ell
\\ &
\leq H_f - \gamma \left[  x_f^\top S x_f/2 - 2  x_\ell^\top  A_{\ell f}^\top P^\top S^- P A_{\ell f}  x_\ell \right].
\end{align*}
At the same time, integrating \eqref{eqWdot}, we obtain that,  for each $t \in [0,T_\ell]$,  
$$  W(x_\ell(t)) \leq H_\ell T_\ell + W(x_\ell(0)) \leq  H_\ell T_\ell + \sigma_o, $$
where $\sigma_o$ comes from \eqref{eqSsigmao}.  
Defining 
$$ R_\ell := \left[ \min_i 
\{\underline{\alpha}_i\}  \right]^- \left(H_\ell T_\ell + \sigma_o \right),$$
we have  
\begin{align*}
\dot Z(x_f) & 
\leq H_f - \gamma 
\left[ \lambda_{1}(S) |x_f|^2/2 - 2   |A_{\ell f}^\top P^\top S^- P A_{\ell f}|  R_\ell^2 \right],
\end{align*}
for all 
$|x_\ell| \leq R_\ell$.

Note that, for all $x_f$ such that 
$$ |x_f|^2 > d^2_f :=  \frac{4 |A_{\ell f}^\top P^\top S^- P A_{\ell f}|  R_\ell^2}{\lambda_1(S)} + \frac{4 2 H_f}{\lambda_1(S) \gamma_o},  $$
$\dot{Z}(x_f) \leq 0$. This   implies that,  for all $t \in [0,T_\ell]$,  
\begin{align} \label{eqZb}
Z(x_f(t)) \leq \max \left\{ \sigma_{fo},  \sigma_f \right\}, \quad \sigma_f := \max \{ Z(x_f) : |x_f| \leq d_f\} \quad \sigma_{f o} := \max \{ Z(x_f) : |x_f| \leq r_o \}.  
\end{align}
In turn, for each $t \in [0,T_\ell]$, 
\begin{equation}\label{398}
|x_f(t)| \leq \bar{r}_o := \left[ \left( \sum^{n_f}_{i = 1} p_i \right) \min_i \{\underline{\alpha}_i\}  \right]^- \left( \max \left\{ \sigma_{fo},  \sigma_f \right\}\right).  
\end{equation} 
Clearly, the previous upper bound is uniform in $\gamma \geq \gamma_o$. 

Next, we focus on the solutions' behaviour  after $T_\ell$ (i.e., once $|x_\ell| \leq r_\ell$).  Given $\epsilon>0$, we see that, for all $\gamma \geq \gamma_o$ and for all $x_f$ and $x_\ell$ such that 
$$ |x_f| > \beta_1 := 1 + \frac{2 \bar p r_\ell}{\lambda_{1}(S)} + \sqrt{ \frac{\epsilon + H_f}{\gamma_o \lambda_{1}(S)}} \quad \text{and} \quad |x_\ell| \leq r_\ell, $$
after \eqref{296}, 
we conclude that
$
\dot Z(x_f) \leq - \epsilon.
$
Furthermore, $|x_\ell(t)| \leq r_\ell$ for all $t \geq T_\ell$, then the set 
$$  \mathcal{S}_{\sigma_1} := \{ x_f \in \mathbb{R}^{n_f} : Z(x_f) \leq \sigma_1 \}, \quad \sigma_1 :=  \max \{ Z(y) : y \in \mathcal{B}_{\beta_1} \}, \quad  \mathcal B_{\beta_1} :=  \{ x_f \in \mathbb{R}^{n_f} : |x_f| \leq \beta_1 \}, $$
is attractive and becomes forward invariant after time $T_\ell$.

Since $Z : \mathbb{R}^n \rightarrow \mathbb{R}_{\geq 0}$ is continuous and $\mathcal{B}_{\beta_1}$ is bounded, we conclude that $\sigma_1$ is well defined. 
As a result, the ultimate bound for $x_f(t)$ is 
$$ r_f = \left[ \left( \sum^{n_f}_{i = 1} p_i \right) \min_i \{\underline{\alpha}_i\} \right]^- (\sigma_1). $$  
Indeed, $Z(x_f) \leq \sigma_1$ implies $|x_f| \leq r_f$. 

Finally, as for $t\mapsto x_\ell(t)$ we give next an upperbound, denoted by $T_f (r_o,\gamma_o)$, on the time that the solutions to \eqref{foldyn}, with $\gamma \geq \gamma_o$ and starting from $\mathcal{B}_{r_o} := \{x_f \in \mathbb{R}^{n_\ell} : |x_f| \leq r_o \}$, take to reach $\mathcal{B}_{\beta_1} \subset \mathcal{S}_{\sigma_1}$. 

Let a solution $t \mapsto x_f(t)$ to \eqref{foldyn} starting from $x_f(0) \in \mathcal{B}_{r_o}$. Now,  we use the fact  $|x_f(T_\ell)| \leq \bar{r}_o$ with $\bar{r}_o$ coming from \eqref{398} and $\bar{r}_o$ is uniform in $\gamma$. As a result, along the solution $t \mapsto x_f(t)$, we have 
$\dot Z (x_{f}(t)) \leq  - \epsilon$ from $T_\ell$ and up to when it reaches $\mathcal{B}_{\beta_1}$ for the first time after $T_\ell$.
Hence, before reaching $\mathcal{B}_{\beta_1}$, we have 
\begin{equation}\label{459}
Z(x_{f}(t)) \leq  - \epsilon t + Z(x_{f}(T_\ell))
\end{equation} 
and, thus, using \eqref{eqZb}, we can take $T_f = T_\ell + \max\{\sigma_{fo}, \sigma_f \}/\epsilon$. 
 Clearly, $T_f$ and $r_f$ depend only on $(r_o,\gamma_o)$. Thus, the ultimate bounded guaranteed for the solutions of \color{black}\eqref{foldyn} is also uniform in $\gamma$.
\end{proof}

\begin{corollary}[Uniform boundedness]
Under Assumptions \ref{ass0} and \ref{ass1} the solutions of the closed-loop system in \eqref{eqcl} are globally uniformly bounded, {\it i.e.,} Property \ref{itemP1} holds.  
\end{corollary}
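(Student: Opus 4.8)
The plan is to observe that Property \ref{itemP1} is essentially already contained in the proof of Theorem \ref{thm1}: besides producing an ultimate bound valid after a finite time, that proof also bounds each solution on the whole transient interval, and every constant it introduces was flagged as independent of $\gamma\ge\gamma_o$. So the proof of the corollary should amount to collecting those bounds, patching the transient estimate to the ultimate one, and recording that the result depends only on $(r_o,\gamma_o)$.

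Concretely, I would fix $r_o>0$, $\gamma_o>0$ and $\gamma\ge\gamma_o$, take $|x(0)|\le r_o$, and (as in the proof of Theorem \ref{thm1}) assume $r_o\ge\beta$, the case of smaller $r_o$ following by using the bound obtained for $\max\{r_o,\beta\}$. Then $|x_\ell(0)|\le r_o$ and $|x_f(0)|\le r_o$, and since a permutation preserves Euclidean norms it suffices to bound $|x_\ell(t)|$ and $|x_f(t)|$. For the leading component, integrating \eqref{eqWdot} on $[0,T_\ell]$ gives $W(x_\ell(t))\le H_\ell T_\ell+\sigma_o$ there, hence $|x_\ell(t)|\le R_\ell$ with $R_\ell=R_\ell(r_o,\gamma_o)$ the constant introduced in the proof of Theorem \ref{thm1}; and for $t\ge T_\ell$ one already has $|x_\ell(t)|\le r_\ell$. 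Hence $|x_\ell(t)|\le\max\{R_\ell,r_\ell\}$ for all $t\ge 0$ and all $\gamma\ge\gamma_o$.

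For the follower component, \eqref{398} gives $|x_f(t)|\le\bar r_o$ for $t\in[0,T_\ell]$, with $\bar r_o=\bar r_o(r_o,\gamma_o)$. For $t\ge T_\ell$ one has $|x_\ell(t)|\le r_\ell$, so \eqref{296} yields $\dot Z(x_f)\le -\epsilon<0$ whenever $|x_f|>\beta_1$, while $Z\le\sigma_1$ on $\mathcal B_{\beta_1}$; combined with $Z(x_f(T_\ell))\le\max\{\sigma_{fo},\sigma_f\}$ from \eqref{eqZb} and a standard comparison argument, this gives $Z(x_f(t))\le\max\{\sigma_{fo},\sigma_f,\sigma_1\}$ for all $t\ge T_\ell$, hence $|x_f(t)|\le\bar R_f=\bar R_f(r_o,\gamma_o)$ there. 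Thus $|x_f(t)|\le\max\{\bar r_o,\bar R_f\}$ for all $t\ge 0$ and all $\gamma\ge\gamma_o$, and consequently $|x(t)|\le R:=\max\bigl\{r_o,\sqrt{\max\{R_\ell,r_\ell\}^2+\max\{\bar r_o,\bar R_f\}^2}\bigr\}$, which gives Property \ref{itemP1} with $R=R(r_o,\gamma_o)\ge r_o$.

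I do not expect a genuine obstacle; the only point requiring care is uniformity in $\gamma$, i.e. checking that none of $T_\ell,\sigma_o,R_\ell,r_\ell,\bar r_o,\beta_1,\sigma_1,\bar R_f$ depends on $\gamma$ (only on $\gamma_o$), which was already verified termwise in the proof of Theorem \ref{thm1}. Alternatively, a slightly more self-contained argument would use \eqref{eqWdot} to show that the sublevel set $\mathcal S_{\sigma_o}$ of \eqref{eqSsigmao} is forward invariant---since $\dot W\le -\min\{\Psi(x_\ell),\epsilon\}\le 0$ outside $\mathcal B_\beta\subseteq\mathcal S_{\sigma_o}$---bounding $|x_\ell(t)|$ for all $t$ at once, then repeat the completing-the-square estimate that produced $\beta_1$ (now with $\max\{R_\ell,r_\ell\}$ in place of $r_\ell$) to get forward invariance of a sublevel set of $Z$; both routes lead to the same conclusion.
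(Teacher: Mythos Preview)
Your proposal is correct and follows essentially the same approach as the paper: both proofs collect the transient and ultimate bounds already obtained in the proof of Theorem \ref{thm1}, check that every constant depends only on $(r_o,\gamma_o)$, and patch them together. The only organisational difference is that for the follower component the paper uses directly your ``alternative'' route---it feeds the global bound $|x_\ell(t)|\le R_\ell$ (valid for all $t\ge 0$) into the completing-the-square estimate to get $\dot Z(x_f)\le 0$ outside a single ball, yielding $|x_f(t)|\le R_f$ for all $t\ge 0$ in one step---whereas your primary argument splits $[0,\infty)$ at $T_\ell$ and uses \eqref{398} on the first piece; both are valid and give the same conclusion.
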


\begin{proof}
The statement of Theorem \ref{thm1} holds, therefore, \color{black} given $r_o>0$ and $\gamma_o>0$, for all $\gamma \geq \gamma_o$, we have
$$
|x_\ell(0)| \leq r_o \implies |x_\ell(t)| \leq r_\ell(\gamma_o) \qquad \forall t \geq T_\ell(r_o, \gamma_o).
$$
Furthermore, we were able to show that on the interval $[0, T_\ell(r_o, \gamma_o)]$, we have 
$$ W(x_{\ell}(t)) \leq  H_\ell T_\ell + W(x_{\ell}(0)).$$
Hence,  if we let $\sigma_\ell :=\max \{ W(y) : |y| \leq r_o \}$,  it follows that 
$$ |x_\ell(t)| \leq R_\ell := \left[ \min_i \{ \underline{\alpha}_i\} \right]^- \big( \sigma_\ell + H_\ell T_\ell + r_\ell \big)   \qquad \forall t \geq 0.   $$

Next, for the solutions to \eqref{foldyn}, for any $\gamma> \gamma_o$ and $|x_f(0)| \leq r_o$,  we know that
$$ |x_f(t)| \leq r_f \qquad \forall t \geq T_f(\gamma_o,r_o).  $$ 
At the same time, from the previous proof, we know that 
\begin{align*}
\dot Z(x_f) & 
\leq H_f - \gamma 
\left[ \lambda_{1}(S) |x_f|^2/2 - 2   |A_{\ell f}^\top P^\top S^- P A_{\ell f}|  R_\ell^2 \right].
\end{align*}
As a result, when
$$ |x_f|^2 > d^2_f :=  \frac{4 |A_{\ell f}^\top P^\top S^- P A_{\ell f}|  R_\ell^2}{\lambda_1(S)} + \frac{4 2 H_f}{\lambda_1(S) \gamma_o},  $$
then $\dot{Z}(x_f) \leq 0$. Hence, for each $t \geq 0$, 
\begin{align} 
Z(x_f(t)) \leq \max \left\{ \sigma_{fo},  \sigma_f \right\}, \quad \sigma_f := \max \{ Z(x_f) : |x_f| \leq d_f\}, \quad \sigma_{f o} := \max \{ Z(x_f) : |x_f| \leq r_o \}.  
\end{align}
In turn, for each $t \geq 0$, we have 
\begin{equation}
|x_f(t)| \leq R_f := \left[ \left( \sum^{n_f}_{i = 1} p_i \right) \min_i \{\underline{\alpha}_i\}  \right]^- \left( \max \left\{ \sigma_{fo},  \sigma_f \right\}\right).  
\end{equation} 
\end{proof}

\section*{Appendix}

The following lemma is proposed in \cite{9312975},  see also \cite{qu2009cooperative}.   
 
\begin{lemma} \label{lem1} 
Let $L \in \mathbb{R}^{n \times n}$ be the Laplacian matrix of a directed and strongly connected graph. 
Let $v_o := [v_1,v_2,...,v_n]^\top \in \mathbb{R}^{n}$ be the left eigenvector of $L$ associated to the null eigenvalue of $L$. 

Then, the vector $v$ has strictly positive entries and, for $V_o :=  \text{blkdiag}(v_o)$,  we have  
$\text{Ker} (V_o L + L^\top  V_o) = 
 \textrm{Span}~ (1_{n})$ and $V_o L + L^\top  V_o \geq 0$. 
\end{lemma}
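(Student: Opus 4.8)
The plan is to prove the three assertions in turn: strict positivity of the entries of $v_o$, the identity $Q_o 1_n = 0$ together with $Q_o := V_o L + L^\top V_o \geq 0$, and the characterisation $\text{Ker}(Q_o) = \text{Span}(1_n)$.

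For the positivity I would appeal to Perron--Frobenius theory. Fix any scalar $c > \max_i d_i$ and set $B := c I_n - L = (c I_n - D) + A$. Since each $d_i \geq 0$ and $A \geq 0$ entrywise, $B$ is a non-negative matrix, and its off-diagonal sparsity pattern coincides with that of $A$; the latter is irreducible precisely because $\mathcal G$ is strongly connected (a non-negative matrix is irreducible iff its associated digraph is strongly connected), so $B$ is irreducible. From $L 1_n = 0$ we get $B 1_n = c 1_n$, so $c$ is an eigenvalue of $B$; and since the $i$th row sum of the non-negative matrix $B$ equals $c - d_i + \sum_{j \neq i} a_{ij} = c$, we also have $\rho(B) = c$. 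By the Perron--Frobenius theorem $\rho(B) = c$ is a simple eigenvalue of $B$ admitting a left eigenvector $w$ with strictly positive entries; from $w^\top B = c w^\top$ we obtain $w^\top L = 0$, so $w$ spans the one-dimensional left null space of $L$, i.e.\ $v_o$ is a positive scalar multiple of $w$ and hence $v_i > 0$ for every $i$.

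Next, $Q_o = V_o L + L^\top V_o$ is symmetric by construction. Using $L 1_n = 0$ and $v_o^\top L = 0$ (equivalently $L^\top v_o = 0$) yields $Q_o 1_n = V_o L 1_n + L^\top V_o 1_n = 0 + L^\top v_o = 0$, so $Q_o$ has zero row sums and $1_n \in \text{Ker}(Q_o)$. For $i \neq j$ the $(i,j)$ entry of $Q_o$ equals $v_i(-a_{ij}) + (-a_{ji}) v_j = -(v_i a_{ij} + v_j a_{ji}) \leq 0$. A symmetric matrix with non-positive off-diagonal entries and zero row sums is the Laplacian of an undirected weighted graph with edge weights $w_{ij} := v_i a_{ij} + v_j a_{ji} \geq 0$; consequently $Q_o = \sum_{i<j} w_{ij}(e_i - e_j)(e_i - e_j)^\top$, where $e_k$ denotes the $k$th canonical basis vector, which is manifestly positive semi-definite.

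Finally, the quadratic form reads $x^\top Q_o x = \sum_{i<j} w_{ij}(x_i - x_j)^2$, and since all $v_k > 0$ we have $w_{ij} > 0$ exactly when $a_{ij} > 0$ or $a_{ji} > 0$, i.e.\ when $\{i,j\}$ is an edge of the undirected graph underlying $\mathcal G$. Strong connectivity of $\mathcal G$ makes this undirected graph connected, hence $x^\top Q_o x = 0$ forces $x_i = x_j$ for all $i,j$, that is $x \in \text{Span}(1_n)$; combined with the inclusion $1_n \in \text{Ker}(Q_o)$ already established, this gives $\text{Ker}(Q_o) = \text{Span}(1_n)$. I expect the positivity of $v_o$ to demand the most care — one must justify irreducibility of $B$ from strong connectivity and pin down $\rho(B) = c$ so that the Perron eigenvector genuinely corresponds to the null eigenvalue of $L$ — whereas the remaining two claims rest only on the elementary fact that a symmetric, zero-row-sum, non-positive-off-diagonal matrix is an undirected weighted Laplacian, together with connectivity of the symmetrised graph.
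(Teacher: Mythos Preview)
Your proof is correct. The paper does not actually supply a proof of this lemma; it only cites external references, so there is no ``paper's own proof'' to compare against. Your argument is the standard one: Perron--Frobenius applied to the irreducible non-negative matrix $B = cI_n - L$ yields positivity of $v_o$, and the key observation that $Q_o = V_o L + L^\top V_o$ is itself the Laplacian of an undirected weighted graph (the so-called \emph{mirror} of $\mathcal G$, with edge weights $w_{ij} = v_i a_{ij} + v_j a_{ji}$) immediately gives both $Q_o \geq 0$ and, via connectivity of the symmetrised graph, $\text{Ker}(Q_o) = \text{Span}(1_n)$. One cosmetic remark: the lemma's phrasing ``the left eigenvector'' is only well-defined up to a nonzero scalar, so strictly speaking what you (correctly) establish is that the one-dimensional left null space of $L$ is spanned by a vector with strictly positive entries; the lemma should be read with this convention.
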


The next result can be deduced from  \cite[Section 4.3.5]{qu2009cooperative}.  

\begin{lemma} \label{lem2Mf}
Let $M \in \mathbb{R}^{n \times n}$ be a  non-singular M-matrix.  Then, the matrices 
$$ S := R M + M^\top R \quad  \text{and} \quad  R :=  \text{blkdiag} \left\{ {M^\top}^- 
1_{n} \right\}   \left( \text{blkdiag} \left\{ M^- 
1_{n} \right\} \right)^- $$
are positive definite. 

\end{lemma}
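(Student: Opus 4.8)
The final statement to prove is Lemma~\ref{lem2Mf}: for a non-singular M-matrix $M$, the diagonal matrix $R := \text{blkdiag}\{{M^\top}^{-1}1_n\}\,(\text{blkdiag}\{M^{-1}1_n\})^{-1}$ is such that $R$ itself and $S := RM + M^\top R$ are positive definite.

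\medskip

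\textbf{Proof proposal.}

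The plan is to exploit the classical characterization of non-singular M-matrices: $M$ is a non-singular M-matrix if and only if $M^{-1}$ exists and is entrywise non-negative. In fact more is true when we also use irreducibility-type arguments, but the cleanest route uses the fact that for a non-singular M-matrix there is a strictly positive vector $w$ with $Mw > 0$ componentwise; a canonical choice is $w := M^{-1}1_n$, which is well defined, non-negative, and—because $1_n$ has all positive entries and $M^{-1}$ has no zero row (otherwise $M^{-1}$ would be singular)—is in fact strictly positive, with $Mw = 1_n > 0$. Likewise $\tilde w := {M^\top}^{-1}1_n = (M^{-1})^\top 1_n$ is strictly positive and satisfies $M^\top \tilde w = 1_n > 0$. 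With these two positive vectors in hand, note that $R = \text{blkdiag}\{\tilde w\}\,(\text{blkdiag}\{w\})^{-1}$ is diagonal with strictly positive diagonal entries $r_i = \tilde w_i / w_i > 0$, hence $R$ is positive definite. This handles the first assertion.

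For positive definiteness of $S = RM + M^\top R$, the strategy is to interpret $R$ as producing a diagonal scaling that makes $M$ simultaneously row- and column-dominant, so that a Gershgorin/diagonal-dominance argument applies. First I would compute $RM\,1_n$ and see what vector it equals: since $M$ is a $\mathcal Z$-matrix, write $M = \tilde D - N$ with $N \ge 0$ entrywise and $\tilde D$ diagonal; the key identities are $Mw = 1_n$ and $M^\top\tilde w = 1_n$. The idea is that the symmetric matrix $S$ has nonpositive off-diagonal entries (since $M$ is a $\mathcal Z$-matrix and $R$ is a positive diagonal, $S_{ij} = r_i m_{ij} + r_j m_{ji} \le 0$ for $i \ne j$), so $S$ is itself a symmetric $\mathcal Z$-matrix; to conclude it is positive definite it suffices to exhibit a strictly positive vector $\xi$ with $S\xi > 0$ componentwise (a symmetric $\mathcal Z$-matrix with such a vector is a non-singular M-matrix, hence here positive definite). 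The natural candidate is $\xi := w = M^{-1}1_n$: then $S w = RM w + M^\top R w = R\,1_n + M^\top (\text{diag}(r)\,w)$. By construction $\text{diag}(r)\,w = \tilde w$ componentwise (since $r_i w_i = \tilde w_i$), so $M^\top R w = M^\top \tilde w = 1_n$, giving $Sw = R\,1_n + 1_n = (r_1+1,\dots,r_n+1)^\top > 0$. Since $w > 0$ and $Sw > 0$ componentwise, $S$ is a symmetric non-singular M-matrix, hence positive definite.

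\medskip

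The main obstacle I anticipate is getting the two algebraic identities $\text{diag}(r)\,w = \tilde w$ and $M^\top\tilde w = 1_n$ to line up exactly so that the cross terms in $Sw$ collapse to $1_n$; this is precisely why $R$ is defined as the particular product $\text{blkdiag}\{\tilde w\}(\text{blkdiag}\{w\})^{-1}$ rather than some other scaling, and it is the one place where the specific formula for $R$ is used rather than merely ``some positive diagonal.'' A secondary point requiring a line of justification is the strict positivity of $w$ and $\tilde w$ (not just non-negativity): this follows because $M^{-1}$ is non-negative and invertible, so it cannot have an all-zero row, whence every entry of $M^{-1}1_n$ is a sum of a non-negative row of $M^{-1}$ against the all-ones vector and is therefore strictly positive; the same applies to $(M^{-1})^\top$. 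Once these are in place, invoking the standard equivalence ``symmetric $\mathcal Z$-matrix admitting a componentwise-positive vector mapped to a componentwise-positive vector $\Rightarrow$ positive definite'' (e.g.\ from the references cited in Section~\ref{sec1}) closes the argument.
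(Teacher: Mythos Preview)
Your proof is correct. The paper does not actually prove this lemma; it merely states that the result ``can be deduced from \cite[Section 4.3.5]{qu2009cooperative}'' and gives no argument of its own. Your approach---showing $w:=M^{-1}1_n>0$ and $\tilde w:=(M^\top)^{-1}1_n>0$ via inverse-positivity of non-singular M-matrices, hence $R=\text{blkdiag}\{\tilde w\}(\text{blkdiag}\{w\})^{-1}$ is diagonal positive, and then verifying $Sw=R1_n+1_n>0$ so that the symmetric $\mathcal Z$-matrix $S$ is a non-singular (hence positive-definite) M-matrix---is exactly the standard route taken in the cited reference, and the key algebraic collapse $Rw=\tilde w$ that makes $M^\top Rw=1_n$ is precisely the reason for the specific form of $R$, as you correctly identify.
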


\def\loria{Loria} \def\nesic{Ne\v{s}i\'{c}\,}\def\nonumero{\def\numerodeitem{}}


\end{document}